\documentclass{article}

\usepackage{amsmath,amssymb,latexsym,graphics,epsfig}
\usepackage{color}
\usepackage{amsthm}
\usepackage{graphicx,url}

\def\0{{\bf 0}}
\def\1{{\bf 1}}
\def\v{{\bf v}}

\newtheorem{thm}{Theorem}[section]
\newtheorem{prp}[thm]{Proposition}
\newtheorem{lem}[thm]{Lemma}

\newtheorem{cor}[thm]{Corollary}
\newtheorem{example}[thm]{Example}

\def\A{\mbox{\boldmath $A$}}
\def\B{\mbox{\boldmath $B$}} 
\def\C{\mbox{\boldmath $C$}}

\def\C{\mathbb C}

\def\diag{\mathop{\rm diag }\nolimits}


\def\v{\mbox{\boldmath $v$}}

\def\O{\mbox{\boldmath $O$}}

\def\A{\mbox{\boldmath $A$}}
\def\B{\mbox{\boldmath $B$}}
\def\C{\mbox{\boldmath $C$}}

\def\D{\mbox{\boldmath $D$}}

\def\I{\mbox{\boldmath $I$}}
\def\J{\mbox{\boldmath $J$}}
\def\L{\mbox{\boldmath $L$}}
\def\M{\mbox{\boldmath $M$}}

\def\Q{\mbox{\boldmath $Q$}}

\def\S{\mbox{\boldmath $S$}}





\def\diag{\mathop{\rm diag }\nolimits}

\def\tr{\mathop{\rm tr }\nolimits}

\begin{document}

\title{An Interlacing Approach for Bounding \\
the Sum of Laplacian Eigenvalues of Graphs}

\author{A. Abiad$^a$, M.A. Fiol$^b$, W.H. Haemers$^a$, G. Perarnau$^b$
\\ \\
{\small $^a$Tilburg University, Dept. of Econometrics and O.R.} \\
{\small  Tilburg, The Netherlands}\\
{\small (e-mails: {\tt
\{A.AbiadMonge,Haemers\}@uvt.nl)}} \\
{\small $^b$Universitat Polit\`ecnica de Catalunya, BarcelonaTech} \\
{\small Dept. de Matem\`atica Aplicada IV, Barcelona, Catalonia}\\
{\small (e-mails: {\tt
\{fiol,guillem.perarnau\}@ma4.upc.edu})} \\
 }
\maketitle
\begin{abstract}
We apply eigenvalue interlacing techniques for obtaining lower
and upper bounds for the sums of Laplacian eigenvalues of
graphs, and characterize equality. This leads to
generalizations of, and variations on theorems by Grone, and
Grone \& Merris. As a consequence we obtain inequalities
involving bounds for some well-known parameters of a graph,
such as edge-connectivity, and the isoperimetric number.
\end{abstract}

\section{Eigenvalue interlacing}
Throughout this paper, $G=(V,E)$ is a finite simple graph with
$n=|V|$ vertices.
Recall that the Laplacian matrix of $G$ is $\L=\D-\A$ where $\D$ is the diagonal matrix
of the vertex degrees and $\A$ is the adjacency matrix of $G$.
Let us also recall the following basic result about interlacing
(see~\cite{H1995}, \cite{F1999}, or~\cite{BH2012}).
\begin{thm}\label{interlacing}
Let $\A$ be a real symmetric $n\times n$  matrix with
eigenvalues $\lambda_1\ge\cdots\ge \lambda_n$. For some $m<n$,
let $\S$ be a real $n\times m$ matrix with orthonormal columns,
$\S^{\top}\S=\I$, and consider the matrix $\B=\S^{\top}\A\S$,
with eigenvalues $\mu_1\ge\cdots\ge \mu_m$. Then,
\begin{itemize}
\item[$(a)$] the eigenvalues of $\B$ interlace those of
    $\A$, that is,
\begin{equation}
\label{ineq:interlacing}
\lambda_i\ge \mu_i\ge \lambda_{n-m+i},\qquad i=1,\ldots, m,
\end{equation}
\item[$(b)$] if the interlacing is tight, that is, for some
    $0\le k\le m$, $\lambda_i=\mu_i$, $i=1,\ldots,k$, and
    $\mu_i=\lambda_{n-m+i}$, $i=k+1,\ldots, m$, then
    $\S\B=\A\S$.
\end{itemize}
\end{thm}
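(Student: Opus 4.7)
The plan is to establish part (a) via the Courant--Fischer min--max characterization of eigenvalues, and then deduce part (b) by showing that tight interlacing forces $\S$ to send eigenvectors of $\B$ to eigenvectors of $\A$. The key observation throughout is that, since $\S^{\top}\S=\I$, for any $\x\in\Re^m$ one has
\[
(\S\x)^{\top}\A(\S\x)=\x^{\top}\B\x \quad\text{and}\quad (\S\x)^{\top}(\S\x)=\x^{\top}\x,
\]
so Rayleigh quotients are preserved; moreover, $\S$ sends each $i$-dimensional subspace $U\subseteq\Re^m$ injectively to an $i$-dimensional subspace $\S U\subseteq\Re^n$.

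For part (a), I would apply Courant--Fischer. On the $\B$-side, $\mu_i=\max_{\dim U=i}\min_{\0\neq\x\in U}(\x^{\top}\B\x/\x^{\top}\x)$; by the preservation above, this maximum runs over a subfamily of $i$-dimensional subspaces of $\Re^n$ (those of the form $\S U$) on which the corresponding $\A$-Rayleigh quotients coincide, so $\mu_i\le\max_{\dim V=i}\min_{\0\neq\z\in V}(\z^{\top}\A\z/\z^{\top}\z)=\lambda_i$. The lower bound $\mu_i\ge\lambda_{n-m+i}$ follows dually from the min--max formula for $\lambda_{n-m+i}$ (or by applying the upper bound to $-\A$).

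For part (b), let $\v_1,\ldots,\v_m$ be an orthonormal basis of eigenvectors of $\B$ with $\B\v_i=\mu_i\v_i$; by the key observation, $\S\v_1,\ldots,\S\v_m$ are orthonormal in $\Re^n$ with $(\S\v_i)^{\top}\A(\S\v_i)=\mu_i$. I claim $\A\S\v_i=\mu_i\S\v_i$ for every $i$, by top-down induction on $i=1,\ldots,k$ using $\mu_i=\lambda_i$, and by bottom-up induction on $i=m,\ldots,k+1$ using $\mu_i=\lambda_{n-m+i}$. For $i=1$, $(\S\v_1)^{\top}\A(\S\v_1)=\mu_1=\lambda_1$ forces $\S\v_1$ to be a top eigenvector of $\A$. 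Inductively, once $\S\v_1,\ldots,\S\v_{i-1}$ are orthonormal eigenvectors of $\A$ for $\lambda_1,\ldots,\lambda_{i-1}$, they span an $\A$-invariant subspace on whose orthogonal complement $\A$ has top eigenvalue $\lambda_i$; since $\S\v_i$ lies in this complement with Rayleigh quotient $\mu_i=\lambda_i$, it must be an eigenvector for $\lambda_i$. The symmetric argument handles the indices above $k$. Hence $\A\S\v_i=\mu_i\S\v_i=\S\B\v_i$ for each $i$, and since the $\v_i$ form a basis of $\Re^m$, $\A\S=\S\B$.

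The main obstacle lies in justifying, at each inductive step, that the top eigenvalue of $\A$ on the orthogonal complement of $\{\S\v_1,\ldots,\S\v_{i-1}\}$ is exactly $\lambda_i$. This relies on the spectral theorem: after removing an invariant subspace of dimension $i-1$ on which $\A$ has eigenvalues $\lambda_1,\ldots,\lambda_{i-1}$, the spectrum of $\A$ on the complement consists of $\lambda_i,\ldots,\lambda_n$. Keeping this bookkeeping correct when $\A$ has repeated eigenvalues, and coupling the top-down and bottom-up inductions uniformly through the threshold $k$, is the only delicate point of the argument.
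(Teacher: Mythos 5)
Your proof is correct. Note that the paper itself does not prove Theorem~\ref{interlacing}; it is quoted as a known result with references to Haemers, Fiol, and Brouwer--Haemers, so the only meaningful comparison is with the standard proof given there. For part $(a)$ the classical argument avoids invoking the full Courant--Fischer theorem: for each $i$ one picks, by a dimension count, a nonzero vector in $\langle \v_1,\ldots,\v_i\rangle$ whose image under $\S$ is orthogonal to the top $i-1$ eigenvectors of $\A$, and sandwiches its Rayleigh quotient between $\lambda_i$ and $\mu_i$; your version subsumes this by observing that $\S$ maps $i$-dimensional subspaces to $i$-dimensional subspaces while preserving Rayleigh quotients, so the max--min over the image family is dominated by the max--min over all of $\Re^n$. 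Both are correct; yours is shorter if one is willing to take Courant--Fischer as known, the classical one is self-contained. For part $(b)$ your two-sided induction (top-down through $i\le k$ using $\mu_i=\lambda_i$, bottom-up through $i>k$ using $\mu_i=\lambda_{n-m+i}$) is essentially the argument in the cited sources, and the one step you flag as delicate --- that after splitting off the invariant subspace spanned by $\S\v_1,\ldots,\S\v_{i-1}$, carrying eigenvalues $\lambda_1,\ldots,\lambda_{i-1}$, the restriction of $\A$ to the orthogonal complement has largest eigenvalue exactly $\lambda_i$ --- is indeed justified by the spectral theorem exactly as you say, since the spectrum of a symmetric matrix is the multiset union of the spectra of its restrictions to an invariant subspace and its orthogonal complement. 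No gap.
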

Two interesting particular cases are obtained by choosing
appropriately the matrix $\S$.
If $\S=[\, \I\ \ \O\,]^\top$, then $\B$ is a principal
submatrix of $\A$.
If ${\cal P}=\{U_1,\ldots,U_m\}$ is a partition of $\{1,\ldots,n\}$
we can take for $\B$ the so-called {\em quotient matrix} of $\A$ with
respect to ${\cal P}$.

The first case gives useful conditions for an induced subgraph
$G'$ of a graph $G$, because the adjacency matrix of $G'$ is a
principal submatrix of the adjacency matrix of $G$. However,
the Laplacian matrix $\L'$ of $G'$ is in general not a
principal submatrix of the Laplacian matrix $\L$ of $G$. But
$\L'+\D'$ is a principal submatrix of $\L$ for some nonnegative
diagonal matrix $\D'$. Therefore the left hand inequalities in
(\ref{ineq:interlacing}) still hold for the Laplacian
eigenvalues, because adding the positive semi-definite matrix
$\D'$ decreases no eigenvalue.

In the case that $\B$ is a quotient matrix of $\A$ with respect
to $\cal P$, the element $b_{ij}$ of $\B$ is the average row
sum of the block $\A_{i,j}$ of $\A$ with rows and columns
indexed by $U_i$ and $U_j$, respectively. If $\cal P$ has
characteristic matrix $\C$ (that is, the columns of $\C$ are
the characteristic vectors of $U_1,\ldots,U_m$) then we take
$\S=\C\D^{-1/2}$, where
$\D=\diag(|U_1|,\ldots,|U_m|)=\C^\top\C$. In this case, the
quotient matrix $\B$ is in general not equal to $\S^\top\A\S$,
but $\B = \D^{-1/2}\S^\top\A\S\D^{1/2}$, and thus $\B$ is
similar to (and therefore has the same spectrum as)
$\S^\top\A\S$. If the interlacing is tight, then $(b)$ of
Theorem~\ref{interlacing} reflects that ${\cal P}$ is an {\em
equitable} (or {\em regular}) partition of $\A$, that is, each
block of the partition has constant row and column sums. In
case $\A$ is the adjacency matrix of a graph $G$, equitability
of $\cal P$ implies that the bipartite induced subgraph
$G[U_i,U_j]$ is biregular for each $i\neq j$, and that the
induced subgraph $G[U_i]$ is regular for each
$i\in\{1,\ldots,m\}$. In case of tight interlacing for the
quotient matrix of the Laplacian matrix of $G$, the first
condition also holds, but the induced subgraphs $G[U_i]$ are
not necessarily regular (in this case we speak about an {\em
almost equitable}, or {\em almost regular} partition of $G$).

If a symmetric matrix $\A$ has an equitable partition,
we have the following well-known and useful result (\cite{BH2012}, Section~2.3).

\begin{lem}\label{lem:equitable}
Let $\A$ be a symmetric matrix of order $n$, and suppose ${\cal P}$
is a partition of $\{1,\ldots,n\}$ such that the corresponding
partition of $\A$ is equitable with quotient matrix $\B$.
Then the spectrum of $\B$ is a sub(multi)set of the spectrum of $\A$,
and all corresponding eigenvectors of $\A$ are in the column space of
the characteristic matrix $\C$ of $\cal P$
(this means that the entries of the eigenvector are constant on each partition class $U_i$).
The remaining eigenvectors of $\A$ are orthogonal to the columns of $\C$
and the corresponding eigenvalues remain unchanged if the blocks $\A_{i,j}$
are replaced by $\A_{i,j}+c_{i,j}\J$ for certain constants $c_{i,j}$
(as usual, $\J$ is the all-one matrix).
\end{lem}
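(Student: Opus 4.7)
The whole lemma hinges on a single algebraic identity that encodes equitability, namely
\[
\A\C=\C\B.
\]
I would establish this first. Fixing $u\in U_i$ and looking at column $j$, the left-hand side has entry $\sum_{v\in U_j}A_{uv}$, which is precisely the row sum of the block $\A_{i,j}$; by equitability this sum is independent of the choice of $u\in U_i$, and equals $b_{ij}$. On the right-hand side, $(\C\B)_{uj}=b_{ij}$ as well, so the identity holds. This is the only place where equitability is used directly.

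From $\A\C=\C\B$ the first two statements fall out quickly. If $\B\v=\mu\v$, then $\A(\C\v)=\C(\B\v)=\mu(\C\v)$; and since the columns of $\C$ are indicator vectors of disjoint nonempty sets they are linearly independent, so $\C\v\neq\0$ whenever $\v\neq\0$. Taking $\v$ through a basis of eigenvectors of $\B$ produces linearly independent eigenvectors of $\A$ with the same eigenvalues, so the spectrum of $\B$ embeds in the spectrum of $\A$ with multiplicities, and the corresponding eigenvectors sit in the column space of $\C$; that column space is exactly the set of vectors constant on each class $U_i$.

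For the remaining eigenvectors I would invoke symmetry: $\A\C=\C\B$ shows that $\mathop{\rm Col}(\C)$ is $\A$-invariant, and because $\A$ is symmetric, so is its orthogonal complement $\mathop{\rm Col}(\C)^{\perp}$. Thus a complementary set of eigenvectors of $\A$ may be chosen inside $\mathop{\rm Col}(\C)^{\perp}$, i.e., orthogonal to every column of $\C$.

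Finally, for the perturbation claim, I would use the characterization $\x\in\mathop{\rm Col}(\C)^{\perp}$ iff $\x$ sums to zero on every class $U_i$. If $\A'$ is obtained from $\A$ by adding $c_{i,j}\J$ to each block $\A_{i,j}$, then for such an $\x$ the contribution of the $(i,j)$-block to $(\A'-\A)\x$ restricted to $U_i$ is $c_{i,j}\J\,\x|_{U_j}=c_{i,j}\bigl(\sum_{v\in U_j}x_v\bigr)\1=\0$. Hence $\A'\x=\A\x$, so any eigenvector of $\A$ in $\mathop{\rm Col}(\C)^{\perp}$ is also an eigenvector of $\A'$ with the same eigenvalue. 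The main obstacle, if any, is purely bookkeeping: making sure that the eigenvectors counted from $\B$ (via $\C$) together with those in $\mathop{\rm Col}(\C)^{\perp}$ account for all $n$ eigenvalues of $\A$ with correct multiplicities; the dimension count $\dim\mathop{\rm Col}(\C)+\dim\mathop{\rm Col}(\C)^{\perp}=n$ together with the full column rank of $\C$ closes this gap.
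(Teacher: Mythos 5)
Your proof is correct; note that the paper itself gives no proof of this lemma but cites it as a known result from Brouwer--Haemers (Section~2.3 of \cite{BH2012}), and your argument via the intertwining identity $\A\C=\C\B$ is exactly the standard one found there. The only step you use without comment is that $\B$ admits a full basis of eigenvectors: this needs the observation that $\B$ is similar to the symmetric matrix $\D^{1/2}\B\D^{-1/2}=\S^\top\A\S$ with $\D=\diag(|U_1|,\ldots,|U_m|)$ (a fact the paper records in Section~1), or alternatively one can avoid it entirely by noting that $\A\C=\C\B$ makes $\mathop{\rm Col}(\C)$ an $\A$-invariant subspace on which $\A$ acts, in the basis given by the columns of $\C$, as $\B$, so the spectrum of $\B$ is that of the restriction of the symmetric matrix $\A$ to an invariant subspace. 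Everything else --- the full column rank of $\C$, the $\A$-invariance of $\mathop{\rm Col}(\C)^{\perp}$, and the computation showing $(\A'-\A)\x=\0$ for $\x$ summing to zero on each class --- is complete and correct.
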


Assuming that $G$ has $n$ vertices, with degrees $d_{1}\geq
d_{2}\geq \cdots \geq d_{n}$, and Laplacian matrix $\L$ with
eigenvalues $\lambda_{1}\geq \lambda_{2}\geq \cdots \geq
\lambda_{n}(=0)$, it is known that, for $1\le m\le n$,
\begin{equation}
\label{eq:1HaemersSeminar}
\sum_{i=1}^{m}\lambda_{i}\geq
\sum_{i=1}^{m}d_{i}.
\end{equation}
This is a consequence of Schur's theorem~\cite{S1923} stating
that the spectrum of any symmetric, positive definite matrix
majorizes its main diagonal. In particular, note that if $m=n$
we have equality in \eqref{eq:1HaemersSeminar}, because both
terms correspond to the trace of $\L$.
To prove~\eqref{eq:1HaemersSeminar} by using interlacing, let $\B$
be a principal $m\times m$ submatrix of $\L$ indexed by the
subindexes corresponding to the $m$ higher degrees, with
eigenvalues $\mu_{1}\geq \mu_{2}\geq \cdots \geq \mu_{m}$.
Then,
\begin{equation*}
\tr \B= \sum_{i=1}^{m}d_{i}=
\sum_{i=1}^{m}\mu_{i},
\end{equation*}
and, by interlacing,  $\lambda_{n-m+i}\leq \mu_i\le \lambda_i$
for $i=1,\ldots,m$, whence \eqref{eq:1HaemersSeminar} follows.
Similarly, reasoning with  the principal submatrix $\B$ (of
$\L$) indexed by the $m$ vertices with lower degrees we get:
\begin{equation}
\label{eq:1HaemersSeminar+}
\sum_{i=1}^{m}\lambda_{n-m+i}\leq
\sum_{i=1}^{m}d_{n-m+i}.
\end{equation}

The next result, which is an improvement
of~\eqref{eq:1HaemersSeminar}, is due to Grone~\cite{G1995},
who proved that if $G$ is connected and $m<n$ then,
\begin{equation}
\label{eq:Gronebound}
\sum_{i=1}^{m}\lambda_{i}\geq
\sum_{i=1}^{m}d_{i}+1.
\end{equation}

In \cite{BH2012}, Brouwer and Haemers gave two different proofs
of (\ref{eq:Gronebound}), both using eigenvalue interlacing.
In this paper we extend the ideas of these two proofs
and find a generalization of Grone's result (\ref{eq:Gronebound}), and
another lower bound on the sum of the largest Laplacian eigenvalues,
which is closely related to a bound of Grone and Merris~\cite{GM1994}.

\section{A generalization of Grone's result}
We begin with a basic result from where most of our bounds
derive. Given a graph $G$ with a vertex subset $U\subset V$,
let $\partial U$ be the \emph{vertex-boundary} of $U$, that is,
the set of vertices in $\overline{U}=V\backslash U$ with at least
one adjacent vertex in  $U$.
Also, let $\partial(U,\overline{U})$ denote the \emph{edge-boundary} of
$U$, which is the set of edges which connect vertices in $U$
with vertices in~$\overline{U}$.

\begin{thm}
\label{thm:basic-result} Let $G$ be a connected graph on
$n=|V|$ vertices, having Laplacian matrix $\L$ with eigenvalues
$\lambda_{1}\geq \lambda_{2}\geq \cdots \geq \lambda_{n}(=0)$.
For any given vertex subset $U=\{u_1,\ldots,u_m\}$ with $0< m<
n$, we have
\begin{equation}
\label{basic-ineq}
\sum_{i=1}^{m}\lambda_{n-i}\le \sum_{u\in U}d_{u} + \frac{|\partial(U,\overline{U})|}{n-m} \le  \sum_{i=1}^{m}\lambda_{i}.
\end{equation}
\end{thm}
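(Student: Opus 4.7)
The plan is to apply part~(a) of Theorem~\ref{interlacing} to the quotient matrix of $\L$ with respect to the $(m+1)$-block partition $\mathcal{P}=\{\{u_1\},\ldots,\{u_m\},\overline{U}\}$ of $V$, in the spirit of the Brouwer--Haemers interlacing proofs of Grone's inequality mentioned above. Let $\B$ be the corresponding $(m+1)\times(m+1)$ quotient matrix and $\mu_1\ge\cdots\ge\mu_{m+1}$ its eigenvalues. Two features of $\B$ will be essential. First, since every row sum of $\L$ is $0$, every row sum of $\B$ is $0$; hence $\mu_{m+1}=0$, and this is indeed the smallest eigenvalue of $\B$ because $\B$ is similar to the positive semidefinite matrix $\S^\top\L\S$.

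The second feature is that $\tr\B$ equals precisely the middle expression in~\eqref{basic-ineq}. For each $i\le m$ the block $\{u_i\}$ is a singleton, so $b_{ii}=\L_{u_iu_i}=d_{u_i}$. The remaining diagonal entry $b_{m+1,m+1}$ is the average, over $v\in\overline{U}$, of $\sum_{w\in\overline{U}}\L_{vw}$; because the full row of $\L$ sums to $0$, this partial sum equals the number of neighbours of $v$ in $U$, and summing over $v\in\overline{U}$ produces $|\partial(U,\overline{U})|$. Dividing by $|\overline{U}|=n-m$ gives
\[
\tr\B \;=\; \sum_{u\in U} d_u \;+\; \frac{|\partial(U,\overline{U})|}{n-m}.
\]

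Both inequalities in~\eqref{basic-ineq} now drop out of the interlacing $\lambda_i\ge\mu_i\ge\lambda_{n-m-1+i}$ for $i=1,\ldots,m+1$. For the upper bound, summing $\mu_i\le\lambda_i$ over $i=1,\ldots,m$ and using $\mu_{m+1}=0$ gives $\tr\B=\sum_{i=1}^m\mu_i\le\sum_{i=1}^m\lambda_i$. For the lower bound, summing $\mu_i\ge\lambda_{n-m-1+i}$ over $i=1,\ldots,m$ and using $\mu_{m+1}\ge 0$ gives $\tr\B\ge\sum_{i=1}^m\lambda_{n-m-1+i}=\sum_{i=1}^m\lambda_{n-i}$, since the indices $n-m-1+i$ with $i=1,\ldots,m$ run through $\{n-m,\ldots,n-1\}$. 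The only subtle step is the identification $\mu_{m+1}=0$, which is what allows the last summand to be discarded in the upper bound and hence sharpens the naive trace comparison into the stated inequality.
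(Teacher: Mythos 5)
Your proof is correct and follows essentially the same route as the paper: the same $(m+1)$-part partition into the singletons of $U$ and the block $\overline{U}$, the same identification of $\tr\B$ with the middle term of \eqref{basic-ineq}, and the same use of $\mu_{m+1}=0$ together with interlacing. You are in fact slightly more explicit than the paper in justifying that $0$ is the \emph{smallest} eigenvalue of $\B$ (via similarity to the positive semidefinite matrix $\S^\top\L\S$), which is a point the paper passes over quickly.
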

\begin{proof}
 Consider the partition
 of the vertex set $V$ into $m+1$ parts: $U_i=\{u_i\}$ for
 $u_i\in U$, $i=1,\ldots,m$, and $U_{m+1}=\overline{U}$.
 Then, the corresponding quotient matrix is
 $$
 \B=\left[\begin{array}{ccc|c}
  &  &  & b_{1,m+1}\\
  & \L_{U} &  & \vdots\\
  & &  & b_{m,m+1}\\ \hline b_{m+1,1} & \cdots &
 b_{m+1,m} & b_{m+1,m+1}
 \end{array} \right],
 $$
 where $\L_{U}$ is the principal submatrix of $\L$, with rows and columns indexed by the vertices in $U$,
 $b_{i,m+1}=(n-m)b_{m+1,i}=-|\partial(U_i,\overline{U})|$, and
 $b_{m+1,m+1}=|\partial(U,\overline{U})|/(n-m)$ (because $\sum_{i=1}^{m+1}b_{m+1,i}=0$).
 Let $\mu_1\ge \mu_2\ge \cdots\ge \mu_{m+1}$ be the eigenvalues of $\B$.
 Since $\B$ has row sum $0$, we have $\mu_{m+1}=\lambda_n=0$.
 Moreover,
 \begin{equation*}
 \sum_{i=1}^{m}\mu_{i}= \sum_{i=1}^{m+1}\mu_{i}= \tr \B = \sum_{u\in U}d_{u}+b_{m+1,m+1},
 \end{equation*}
 Then, (\ref{basic-ineq}) follows by applying interlacing, $\lambda_{i}\geq \mu_i \geq \lambda_{n-m-1+i}$
 and adding up for $i=1,2,\ldots,m$.
\end{proof}
If equality holds on either side of~(\ref{basic-ineq}) it follows that
the interlacing is tight (see the proof of Proposition~\ref{equality} for details),
and therefore that the partition of $G$ is almost equitable.
In other words, in case of equality every vertex $u \in U$ is adjacent to either all 
or $0$ vertices in  $\overline{U}$, whereas each vertex
$u\in\overline{U}$ has precisely
$|\partial(U,\overline{U})|/(n-m)$ neighbors in $U$.
But we can be more precise.
\begin{prp}\label{equality}
Let $H$ be the subgraph of $G$ induced by $\overline{U}$,
and let $\vartheta_1\geq\cdots\geq\vartheta_{n-m}(=0)$ be the Laplacian eigenvalues of $H$.
Define $b=|\partial(U,\overline{U})|/(n-m)$.
\begin{itemize}\item[$(a)$] 
Equality holds on the right hand side of
$(\ref{basic-ineq})$ if and only if each vertex of $U$ is
adjacent to all or $0$ vertices of $\overline{U}$, and
$\lambda_{m+1}=\vartheta_{1}+b$.
\item[$(b)$] 
Equality holds on the left hand side of
$(\ref{basic-ineq})$ if and only if each vertex of $U$ is
adjacent to all or $0$ vertices of $\overline{U}$, and
$\lambda_{n-m-1}=\vartheta_{n-m}+b$.
\end{itemize}
\end{prp}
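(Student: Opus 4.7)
The plan is to apply the tight-interlacing part of Theorem~\ref{interlacing} to the quotient matrix $\B$ from the proof of Theorem~\ref{thm:basic-result}, and then to decompose $\sp(\L)$ along the column space of the characteristic matrix $\C$ in the spirit of Lemma~\ref{lem:equitable}. I describe (a) in detail; (b) is handled by the symmetric argument.

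For the forward direction of~(a), the interlacing $\lambda_i\ge\mu_i$ together with $\mu_{m+1}=\lambda_n=0$ and equality in the right half of~(\ref{basic-ineq}) forces $\mu_i=\lambda_i$ for $i=1,\ldots,m$. This is precisely the tight interlacing hypothesis of Theorem~\ref{interlacing}(b) applied to the size-$(m+1)$ quotient, so $\S\B=\L\S$. Because $m$ of the parts are singletons, this matrix identity translates into the almost equitable condition: each $u\in U$ is adjacent either to all or to none of $\overline U$, and every $u\in\overline U$ has exactly $b$ neighbours in $U$. The next step is to split $\sp(\L)$ using this structure. The column space of $\C$ becomes $\L$-invariant and carries the eigenvalues $\mu_1,\ldots,\mu_{m+1}$. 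For a vector $\v$ in the orthogonal complement---so $\v|_U=\0$ and $\1^{\top}(\v|_{\overline U})=0$---a direct computation shows $(\L\v)|_U=\0$ (the ``all'' half of the hypothesis kills $\sum_{w\in\overline U}v_w=0$ at the relevant $u\in U$, and the ``none'' half is automatic) and $(\L\v)|_{\overline U}=(\L_H+b\I)\,\v|_{\overline U}$ (using the constant degree $b$ from $\overline U$ into $U$). Restricting $\L_H$ to $\1_{\overline U}^{\perp}$ yields the eigenvalues $\vartheta_1,\ldots,\vartheta_{n-m-1}$, so
\[
\sp(\L)=\{\mu_i\}_{i=1}^{m+1}\,\cup\,\{\vartheta_j+b\}_{j=1}^{n-m-1}.
\]
Tight interlacing puts $\mu_1,\ldots,\mu_m$ into the top $m$ slots, so $\lambda_{m+1}$ equals the largest remaining value, namely $\vartheta_1+b$. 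Conversely, the adjacency hypothesis yields the same decomposition, and $\lambda_{m+1}=\vartheta_1+b$ forces $\mu_m\ge\vartheta_1+b$, placing $\mu_1,\ldots,\mu_m$ in the top $m$ slots and giving $\sum_{i=1}^m\lambda_i=\sum_{i=1}^m\mu_i=\sum_{u\in U}d_u+b$, the right-hand side of~(\ref{basic-ineq}). Part~(b) is handled by the same argument with tight interlacing now placing the $\mu_i$'s at the bottom of $\sp(\L)$ and the $\vartheta_j+b$'s at the top, the stated eigenvalue equation identifying $\lambda_{n-m-1}$ with the corresponding endpoint of the $(\vartheta+b)$-block.

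The main obstacle is the third step---establishing the clean spectral split $\sp(\L)=\{\mu_i\}\cup\{\vartheta_j+b\}$. Both halves of the ``all or none'' hypothesis are indispensable here: the ``all'' half annihilates the cross term at each $u\in U$, and the constant degree $b$ from $\overline U$ produces the clean scalar shift $b\I$ on the orthogonal complement. Once this decomposition is in hand, the equality characterisation reduces to the bookkeeping question of where the two blocks sit in the sorted spectrum of $\L$.
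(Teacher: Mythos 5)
Your argument follows the same route as the paper's own proof: equality forces tight interlacing, tight interlacing gives the almost equitable partition, and the resulting spectral decomposition $\sp(\L)=\{\mu_i\}_{i=1}^{m+1}\cup\{\vartheta_j+b\}_{j=1}^{n-m-1}$ (which you derive by hand; the paper invokes its Lemma~\ref{lem:equitable}) then localises $\lambda_{m+1}$. The forward direction of $(a)$ is correct, including the observation that the largest element of the leftover block is $\vartheta_1+b$ because $\vartheta_1\ge 0$ and $b>0$.

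The converse, however, contains a genuine gap: you assert that $\lambda_{m+1}=\vartheta_1+b$ ``forces $\mu_m\ge\vartheta_1+b$,'' but nothing in the decomposition gives this. Counting how many elements of the union are at least $\vartheta_1+b$ shows the implication can fail exactly when $\vartheta_1$ has multiplicity at least $2$ as a Laplacian eigenvalue of $H$. Concretely, let $G$ be $K_4$ with a pendant vertex $u_2$ attached to a vertex $u_1$ of the $K_4$, and take $U=\{u_1,u_2\}$, so that $\overline{U}$ induces $H=K_3$, $b=1$, $\vartheta_1=3$. Then $\mu=(5,1,0)$ and $\{\vartheta_j+b\}=\{4,4\}$, so $\sp(\L)=\{5,4,4,1,0\}$; every vertex of $U$ is adjacent to all or to none of $\overline{U}$ and $\lambda_3=4=\vartheta_1+b$, yet $\sum_{i=1}^{2}\lambda_i=9>6=\sum_{u\in U}d_u+b$, so equality fails. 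Here $\mu_m=1<4$, so your claimed inference breaks down, and with it the converse. To be fair, the paper's own proof makes exactly the same unjustified leap (``Since $\lambda_{m+1}=\vartheta_{1}+b$, it follows that $\mu_i=\lambda_i$ for $i=1,\ldots,m$''), so you have faithfully reproduced the published argument, gap included; a correct ``if'' direction needs an additional hypothesis guaranteeing that none of the top $m$ eigenvalues of $\L$ comes from the $\{\vartheta_j+b\}$ block (for instance, that $\mu_m\ge\vartheta_1+b$, stated as such).
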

\begin{proof}
Since $(a)$ and $(b)$ have analogous proofs, we only prove
$(a)$. Suppose equality holds on the right hand side of
(\ref{basic-ineq}). Then
\[
\sum_{i=1}^m \lambda_i = \sum_{i=1}^m\mu_i, \, \mbox{ and }\, \lambda_i\geq\mu_i \,\mbox{ for }\, i=1,\ldots,m
\]
so $\lambda_i=\mu_i$ for $i=1,\ldots,m$. We know that $\mu_{m+1}=\lambda_n=0$, therefore the interlacing is tight
and hence the partition of $G$ is almost equitable.
Now by use of Lemma~\ref{lem:equitable} we have that the eigenvalues of $\L$ are $\mu_1,\ldots,\mu_{m+1}$ together with
the eigenvalues of $\L$ with an eigenvector orthogonal to the characteristic matrix $\C$ of the partition.
These eigenvalues and eigenvectors remain unchanged if $\L$ is changed into
\[ \widetilde{\L}=\left[\begin{array}{cc} \O & \O\\ \O & \L_{\overline{U}}+b\I \end{array}\right].
\]
The considered eigenvalues of $\widetilde{\L}$ and $\L$ 
are $\vartheta_1 + b \geq \cdots \geq \vartheta_{n-m-1}+b$. So
$\L$ has eigenvalues $\lambda_1(=\mu_1)\ge \cdots \ge
\lambda_m(=\mu_m),\mbox{ and }  \vartheta_1+b \geq \cdots \geq
\vartheta_{n-m-1}+b \ge \lambda_n(=\mu_{m+1}=0)$. Hence, we have
$\lambda_{m+1}=\vartheta_{1}+b$. Conversely, if the
partition of $G$ is almost equitable (or equivalently, if the
partition of $\L$ is equitable), $\L$ has eigenvalues
$\mu_1\geq\cdots\geq\mu_m$, $\vartheta_1
+b\geq\cdots\geq\vartheta_{n-m-1}+b$, and
$\mu_{m+1}=\lambda_{n}=0$. Since
$\lambda_{m+1}=\vartheta_{1}+b$, if follows that
$\mu_i=\lambda_i$ for $i=1,\ldots,m$ (tight interlacing),
therefore equality holds on the right hand side of
(\ref{basic-ineq}).
\end{proof}

Looking for examples of the above results, first  observe that there is no graph with $n>2$
satisfying equality in \eqref{eq:Gronebound} for every $0<m<n$.
However the complete graph $K_n$ provides an example for which both inequalities
in Theorem~\ref{thm:basic-result} are
equalities for all $0<m<n$. 
In fact, this is a particular case of the following contruction
(just take $q=1$): Let us consider the {\em graph join} $G$ of
the complete graph $K_p$ with the empty graph $\overline{K_q}$.
(Recall that $G$ is obtained as the graph union of $K_p$ and
$\overline{K_q}$ with all the edges connecting the vertices of
one graph with the vertices of the other.) Let
$V(G)=\{v_1,\ldots,v_p,v_{p+1},\ldots,v_{n}\}$, where $n=p+q$
and the first vertices correspond to those of $K_p$. Then, the
Laplacian eigenvalues of $G$ are $\{n^{p},p^{q-1},0^{1}\}$, and
the following different choices for $U$ provide
some examples illustrating cases $(a)$ and $(b)$ of Proposition \ref{equality}.
\begin{itemize}
\item[$(a1)$]
Let $U=\{v_1,\ldots,v_m\}$, with $0<m\le p$. Then, $b=m$, and
$$
\sum_{u\in U}d_{u} + b = m(n-1)+m= mn=
\sum_{i=1}^m \lambda_i.
$$
\item[$(a2)$]
Let $U=\{v_1,\ldots,v_m\}$, with $p<m<n$. Then, $b=p$, and
$$
\sum_{u\in U}d_{u} + b = p(n-1)+(m-p)p+p=pn+(m-p)p=
\sum_{i=1}^m \lambda_i.
$$
\item[$(b)$]
Let $U=\{v_{n-m+1},\ldots,v_n\}$, with $q\le m<n$. Then, $b=m$, and
$$
\sum_{u\in U}d_{u} + b = qp+(m-q)(n-1)+m=(q-1)p+(m-q+1)n=
\sum_{i=1}^m \lambda_{n-i}.
$$
\end{itemize}
%
Another infinite family of graphs for which we do have equality
on the right hand side of (\ref{basic-ineq}) is the complete
multipartite graph (such that the vertices with largest degree
lie in $U$).

If the vertex degrees of $G$ are $d_1\ge d_2\ge \cdots \ge
d_n$, we can choose conveniently the $m$ vertices of $U$ (that
is, those with higher or lower degrees) to obtain the best
inequalities in~\eqref{basic-ineq}. Namely,
\begin{equation}
\label{eq:Aida1}
\sum_{i=1}^{m}\lambda_{i} \geq \sum_{i=1}^m d_{i} + \frac{|\partial(U,\overline{U})|}{n-m},
\end{equation}
and
\begin{equation}
\label{eq:Aida1'}
\sum_{i=1}^{m}\lambda_{n-i}\le \sum_{i=1}^m d_{n-i+1} + \frac{|\partial(U,\overline{U})|}{n-m}.
\end{equation}
Note that
\eqref{eq:Aida1'}, together with~\eqref{eq:1HaemersSeminar+} for $m+1$, yields
\begin{equation}
\label{eq:Aida1''}
\sum_{i=0}^{m}\lambda_{n-m+i}=\sum_{i=1}^{m}\lambda_{n-i}\le \sum_{i=1}^m
d_{n-i+1}+\min\left\{d_{n-m},\frac{|\partial(U,\overline{U})|}{n-m}\right\}.
\end{equation}
%
%
If we have more information on the structure of the graph, we
can improve the above results by either bounding
$|\partial(U,\overline{U})|$ or `optimizing'  the ratio
$b=|\partial(U,\overline{U})|/(n-m)$. In fact, the right
inequality in~\eqref{basic-ineq} (and, hence,~\eqref{eq:Aida1})
can be improved when $\overline{U}\neq\partial U$. Simply first
delete the vertices (and corresponding edges) of
$\overline{U}\setminus\partial U$, and then apply the
inequality. Then $d_1,\ldots,d_m$ remain the same and
$\lambda_1,\ldots,\lambda_m$ do not increase. Thus we obtain:
\begin{thm}
\label{propo:gen-Grone} Let $G$ be a connected graph on $n=|V|$
vertices, with Laplacian eigenvalues $\lambda_{1}\geq
\lambda_{2}\geq\cdots \geq \lambda_{n}(=0)$. For any given
vertex subset $U=\{u_1,\ldots,u_m\}$ with $0< m< n$,
we have
\begin{equation}
\label{eq:gen-Grone}
\sum_{i=1}^{m}\lambda_{i} \geq \sum_{u\in U} d_{u} + \frac{|\partial(U,\overline{U})|}{|\partial U|}.
\end{equation}
\end{thm}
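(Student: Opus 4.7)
The plan is to reduce Theorem~\ref{propo:gen-Grone} to the right-hand inequality of Theorem~\ref{thm:basic-result}, applied to a suitably pruned induced subgraph of $G$. As hinted in the paragraph preceding the statement, I would delete from $G$ the set $W:=\overline{U}\setminus\partial U$ of vertices having no neighbour in $U$. Call the resulting induced subgraph $G'$; its vertex set is $V':=U\cup\partial U$ of size $n':=m+|\partial U|$, and I write $\lambda_i(G')$ for its Laplacian eigenvalues.

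The first step is to verify that the relevant quantities behave cleanly under this deletion. Since every edge of $G$ incident with $U$ has its other endpoint in $U\cup\partial U=V'$, no such edge is removed. Hence $d_u^{G'}=d_u$ for every $u\in U$, and the edge-boundary of $U$ in $G'$ equals $|\partial(U,\overline{U})|$. Moreover $V'\setminus U=\partial U$, so the factor $n'-m$ appearing in bound~\eqref{basic-ineq} for $G'$ equals $|\partial U|$. Applying Theorem~\ref{thm:basic-result} to $G'$ therefore yields
\[
\sum_{i=1}^m \lambda_i(G')\;\geq\; \sum_{u\in U}d_u \;+\; \frac{|\partial(U,\overline{U})|}{|\partial U|}.
\]

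The second step is to compare the Laplacian spectra of $G'$ and $G$. As recalled in the discussion after Theorem~\ref{interlacing}, the Laplacian $\L'$ of $G'$ is not itself a principal submatrix of $\L$, but $\L'+\D'$ is, where $\D'$ is the nonnegative diagonal matrix recording, for each $v\in V'$, the number of edges from $v$ into $W$. Interlacing for principal submatrices gives $\lambda_i(\L'+\D')\le\lambda_i(\L)$, and since subtracting the positive semi-definite matrix $\D'$ cannot increase any eigenvalue, we conclude $\lambda_i(G')\le\lambda_i(G)$ for all $i\le n'$. Summing over $i=1,\ldots,m$ and chaining with the previous display proves the theorem.

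The only delicate point is that Theorem~\ref{thm:basic-result} is stated for connected graphs while $G'$ may fail to be connected; I expect this to be the mildest of obstacles, since the proof of Theorem~\ref{thm:basic-result} uses connectedness only to ensure that the smallest Laplacian eigenvalue is $0$, which holds automatically. A trivial sanity check is that $|\partial U|\geq 1$ so the denominator makes sense, and this follows from the connectedness of $G$ together with $0<m<n$.
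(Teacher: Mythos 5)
Your proposal is correct and is essentially the paper's own argument: the authors likewise obtain the theorem by deleting $\overline{U}\setminus\partial U$, observing that the degrees in $U$ and the edge-boundary are unchanged while $\lambda_1,\ldots,\lambda_m$ do not increase, and then applying the right-hand inequality of Theorem~\ref{thm:basic-result} to the pruned graph. You actually supply more detail than the paper (the principal-submatrix interlacing for $\L'+\D'$ and the observation that connectedness of $G'$ is not needed because the quotient matrix is positive semi-definite with zero row sums), all of which is sound.
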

Similarly as we did in~\eqref{eq:Aida1}, if we choose the $m$
vertices of $U$ such that they are those with maximum degree,
then we can write:
$$
\sum_{i=1}^{m}\lambda_{i} \geq \sum_{i=1}^m d_i + \frac{|\partial(U,\overline{U})|}{|\partial U|}.
$$
Notice that, as a corollary, we get Grone's result~(\ref{eq:Gronebound})
since always $|\partial(U,\overline{U})|\geq|\partial U|$.

\section{A variation of a bound by Grone and Merris}
%
%
In~\cite{GM1994}, Grone and Merris gave another lower bound for
the sum of the Laplacian eigenvalues, in the case when there is
an induced subgraph consisting of isolated vertices and edges.
Let $G$ be a connected graph of order $n> 2$ with Laplacian
eigenvalues $\lambda_1\ge \lambda_2\ge\cdots\ge\lambda_n$. If
the induced subgraph of a subset $U\subset V$ with $|U|=m$
consists of $r$ pairwise disjoint edges and $m-2r$ isolated
vertices, then
\begin{equation}
\label{eq:GroneMerris}
 \sum_{i=1}^{m}\lambda_i \geq  \sum_{u\in U}d_u +m-r.
\end{equation}


An improvement of this result was given by Brouwer and Haemers
in~\cite{BH2012} (Section 3.10). Let $G$ be a (not necessarily
connected) graph with a vertex subset $U$, with $m=|U|$, and
let $h$ be the number of connected components of $G[U]$ that
are not connected components of $G$. Then,
\begin{equation}
\label{eq:BH'book}
\sum_{i=1}^{m}\lambda_i \geq \sum_{u\in U}d_u +h.
\end{equation}

Following the same ideas as in~\cite{GM1994} and using
interlacing, the bound~\eqref{eq:GroneMerris} of Grone and
Merris can also be generalized as follows:

 \begin{thm}
 \label{thm:gen-GroneMerris}
 Let $G$ be a connected graph of order $n>2$ with Laplacian eigenvalues $\lambda_1\ge \lambda_2\ge\cdots\ge\lambda_n$.
 Given a vertex subset $U\subset V$, with $m=|U|<n$, let $G[U]=(U,E[U])$ be its induced subgraph. Then,
 \begin{equation}
 \label{eq:gen-GroneMerris}
 \sum_{i=1}^{m}\lambda_i \geq  \sum_{u\in U}d_u +m-|E[U]|.
 \end{equation}
 \end{thm}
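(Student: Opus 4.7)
The plan is to deduce Theorem~\ref{thm:gen-GroneMerris} by a short reduction to the Brouwer--Haemers inequality~\eqref{eq:BH'book} quoted just above. Let $c$ denote the total number of components of $G[U]$ and $h$ the number of components of $G[U]$ that are \emph{not} components of $G$. Since $G$ is connected and $m<n$, no component of $G[U]$ can be a component of $G$, and consequently $h=c$. Moreover, any spanning forest of $G[U]$ has exactly $m-c$ edges and is a subgraph of $G[U]$, so $|E[U]|\ge m-c$, i.e., $c\ge m-|E[U]|$. Combining these observations with~\eqref{eq:BH'book} yields
\[
\sum_{i=1}^m \lambda_i \;\ge\; \sum_{u\in U} d_u + h \;=\; \sum_{u\in U} d_u + c \;\ge\; \sum_{u\in U} d_u + m - |E[U]|,
\]
as desired.

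The sentence preceding the theorem (``following the same ideas as in \cite{GM1994} and using interlacing'') suggests, however, a self-contained proof through Theorem~\ref{interlacing}(a). In that approach I would start from the partition $\mathcal P=\{\{u_1\},\ldots,\{u_m\},\overline U\}$ used in Theorem~\ref{thm:basic-result} and refine it via the Grone--Merris trick: for every edge $\{u,v\}\in E[U]$, the orthonormal vector $(e_u-e_v)/\sqrt{2}$ has $\L$-quadratic form $(d_u+d_v)/2+1$, contributing a ``$+1$'' bonus per suitably chosen edge of $G[U]$ to the trace of $\S^{\top}\L\S$ for an appropriate $n\times(m+1)$ orthonormal matrix $\S$. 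Taking these edges from a spanning forest of $G[U]$ yields a gain of $m-c\ge m-|E[U]|$, while the last column of $\S$ (supported on $\overline U$) keeps $\mathbf{1}$ in the column span of $\S$ and hence preserves $\mu_{m+1}=0$ in the spectrum of $\S^{\top}\L\S$; Theorem~\ref{interlacing}(a) then gives the bound.

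The main obstacle in this direct approach is the orthogonality bookkeeping when forest edges share a vertex---unlike the matching case treated by Grone and Merris, where the $2\times 2$ rotations act on disjoint pairs of coordinates, here the rotations interact and must be assembled inductively along the spanning forest. Cyclomatic edges of $G[U]$ contribute no extra to the trace in this construction, which is precisely the slack $|E[U]|-(m-c)$ that makes Theorem~\ref{thm:gen-GroneMerris} a weaker statement than~\eqref{eq:BH'book}.
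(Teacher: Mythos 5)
Your first argument is correct and complete: since $G$ is connected and $U\neq V$, every component of $G[U]$ fails to be a component of $G$, so $h$ equals the number $c$ of components of $G[U]$; a spanning forest argument gives $c\ge m-|E[U]|$, and \eqref{eq:BH'book} then yields \eqref{eq:gen-GroneMerris}. This is a genuinely different route from the paper's proof --- indeed it is exactly the remark the authors make immediately \emph{after} their proof (``Note that \eqref{eq:gen-GroneMerris} also follows from Equation \eqref{eq:BH'book}''), so you have proved the theorem by the shortcut the paper mentions but does not take. The paper's own proof is self-contained and works on the edge side of the Laplacian: it chooses an orientation with all of $E(U,\overline{U})$ directed out of $U$ (and every vertex of $U$ having an outgoing arc), forms $\M=\Q^{\top}\Q$, which shares its nonzero spectrum with $\L=\Q\Q^{\top}$, restricts to the principal submatrix $\M_1$ indexed by $E[U]\cup E(U,\overline{U})$, and partitions those arcs by their tails; the quotient matrix has diagonal entries $d^+(u)+1$, so its trace is $\sum_{u\in U}d_u-|E[U]|+m$, and two applications of interlacing finish the job. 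What the paper's approach buys is independence from the Brouwer--Haemers refinement \eqref{eq:BH'book} (which is only quoted, not proved, in this paper) and a template that immediately extends to Theorem~\ref{thm:gen-GroneMerris2} by appending one more column to $\S$; what your reduction buys is brevity and a transparent explanation of why the bound is weaker than \eqref{eq:BH'book}, namely the slack $|E[U]|-(m-c)\ge 0$ coming from cyclomatic edges. Your second, sketched ``direct'' construction with vectors $(e_u-e_v)/\sqrt2$ is not needed and, as you note, is not actually assembled into an orthonormal $\S$ when forest edges share vertices; since your first argument already stands, this incompleteness does not affect the verdict.
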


 \begin{proof}
 Consider an orientation of $G$ with all edges in $E(U,\overline{U})$ oriented from $U$ to $\overline{U}$, and every vertex in $U\setminus\partial{\overline{U}}$ having some outgoing arc (this is always possible as $G$ is connected).
 Let $\Q$ be the corresponding oriented incidence matrix of $G$, and write
 $\Q=[\, \Q_1\ \Q_2\,]$, where $\Q_1$ corresponds to $E[U] \cup E(U,\overline{U})$, and $\Q_2$ corresponds to $E[\overline{U}]$.
 Consider the matrix $\M=\Q^{\top}\Q$, with entries $(\M)_{ii}=2$,
 $(\M)_{ij}=\pm 1$ if the arcs $e_i,e_j$ are incident to the same vertex ($+1$ if
 both are either outgoing or ingoing, and $-1$ otherwise), and $(\M)_{ij}=0$ if
 the corresponding edges are disjoint, and define $\M_1=\Q_1^\top\Q_1$.
 Then $\M$ has the same nonzero eigenvalues as $\L=\Q \Q^\top$, the Laplacian matrix of $G$, and
 $\M_1$ is a principal submatrix of $\M$.
 For every vertex $u\in U$, let $E_u$ be the set of outgoing arcs from $u$.
 Then $\{E_u\, | \, u\in U\}$ is a partition of $E[U] \cup E(U,\overline{U})$.
 Consider the quotient matrix $\B_1 = (b_{ij})$ of $\M_1$ with respect to this partition.
 Then, $b_{uu}= d^{+}(u)+1$ for each $u\in U$.
 Let $\mu_1\ge \mu_2\ge \cdots\ge \mu_{m}$ be the eigenvalues of $\B_1$, then
 \begin{equation*}
 \tr \B_1 = \sum_{i=1}^{m}\mu_{i} = \sum_{u\in U}d^+_{u}+m=\sum_{u\in U}d_{u}-|E[U]|+m
 \end{equation*}
 and (\ref{eq:gen-GroneMerris}) follows since the eigenvalues of $\B_1$ interlace those of
 $\M_1$, which in turn interlace those of $\M$.
 \end{proof}

Note that (\ref{eq:gen-GroneMerris}) also follows from Equation~(\ref{eq:BH'book}).
However, the result can be improved by considering the partition
${\cal P} = \{E_u\, | u\in U\}\cup \{E[\overline{U}]\}$ of the whole edge set of $G$.

\begin{thm}
\label{thm:gen-GroneMerris2} Let $G$ be a connected graph of
order $n>2$ with Laplacian eigenvalues $\lambda_1\ge \lambda_2\ge\cdots\ge\lambda_n$.
Given a vertex subset $U\subset V$, with $m=|U|<n$, let $G[U]=(U,E[U])$ and
$G[\overline{U}]$ be the corresponding induced subgraphs.
Let $\vartheta_1$ be the largest Laplacian eigenvalue of
$G[\overline{U}]$, then
\begin{equation}
\label{eq:gen-GroneMerris2}
\sum_{i=1}^{m+1}\lambda_i \geq  \sum_{u\in U}d_u +m-|E[U]|
+\vartheta_1.
\end{equation}
\end{thm}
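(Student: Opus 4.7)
The plan is to refine the argument of Theorem~\ref{thm:gen-GroneMerris} by enlarging the test subspace from dimension $m$ to dimension $m+1$, adding one direction that captures the top Laplacian eigenvalue of $G[\overline{U}]$. I would reuse the setup of that proof verbatim: the same orientation of $G$, the oriented incidence matrix $\Q=[\,\Q_1\ \Q_2\,]$, the matrix $\M=\Q^{\top}\Q$ whose nonzero spectrum coincides with that of $\L$, the partition $\{E_u:u\in U\}$ of $E[U]\cup E(U,\overline{U})$ into sets of outgoing arcs, and the quotient matrix $\B_1$ of $\M_1$ with $\tr\B_1=\sum_{u\in U}d_u+m-|E[U]|$. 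The key new ingredient is the matrix $\M_2=\Q_2^{\top}\Q_2$: its nonzero spectrum agrees with that of $\Q_2\Q_2^{\top}=\L(G[\overline{U}])$, so its largest eigenvalue is exactly $\vartheta_1$.

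Next, I would assemble an $|E|\times(m+1)$ matrix $\S$ whose first $m$ columns are the normalized indicator vectors $\chi_{E_u}/\sqrt{|E_u|}$ extended by zero on $E[\overline{U}]$, and whose last column is a unit eigenvector $\x$ of $\M_2$ for eigenvalue $\vartheta_1$, extended by zero on $E[U]\cup E(U,\overline{U})$. Because the sets $E_u$ are pairwise disjoint and disjoint from $E[\overline{U}]$, the columns of $\S$ have pairwise disjoint supports, so $\S^{\top}\S=\I$. The diagonal of $\S^{\top}\M\S$ then consists of the diagonal of $\B_1$ (contributing $d^+(u)+1$ at the position indexed by $u\in U$, exactly as in Theorem~\ref{thm:gen-GroneMerris}) followed by $\x^{\top}\M\x=\x^{\top}\M_2\x=\vartheta_1$, so $\tr(\S^{\top}\M\S)=\sum_{u\in U}d_u+m-|E[U]|+\vartheta_1$.

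Finally, I would invoke Theorem~\ref{interlacing}(a) with $\A=\M$: the eigenvalues of $\S^{\top}\M\S$ interlace those of $\M$, so $\sum_{i=1}^{m+1}\lambda_i(\M)\geq\tr(\S^{\top}\M\S)$. Since $\L$ and $\M$ share the same nonzero spectrum and $\lambda_n(\L)=0$, the left-hand side equals $\sum_{i=1}^{m+1}\lambda_i$, which yields (\ref{eq:gen-GroneMerris2}). The only real subtlety is the bookkeeping that $\S^{\top}\S=\I$ and that the upper-left diagonal block of $\S^{\top}\M\S$ genuinely reproduces the trace of $\B_1$; both reduce to the disjointness of supports used to build $\S$. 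The mild edge case $E[\overline{U}]=\emptyset$ makes $\vartheta_1=0$, whereupon the inequality degenerates to Theorem~\ref{thm:gen-GroneMerris} combined with $\lambda_{m+1}\geq 0$, and is therefore consistent.
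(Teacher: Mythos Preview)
Your argument is correct and is essentially the same as the paper's: build $\S$ from the $m$ normalized characteristic vectors of the $E_u$ together with a unit eigenvector of $\Q_2^{\top}\Q_2$ padded with zeros, then read off $\tr(\S^{\top}\M\S)=\sum_{u\in U}d_u+m-|E[U]|+\vartheta_1$ and apply interlacing. Your write-up is slightly more careful (you verify $\S^{\top}\S=\I$ via disjoint supports and discuss the degenerate case $E[\overline{U}]=\emptyset$), but the method is identical.
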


\begin{proof}
First observe that the Laplacian matrix of $G[\overline{U}]$ is
$\Q_2\Q_2^\top$, and therefore $\vartheta_1$ is also the
largest eigenvalue of $\Q_2^\top\Q_2$. Next we apply
interlacing to an $(m+1)\times(m+1)$ quotient matrix
$\B=\S^\top\M\S$, which is defined slightly different as
before. The first $m$ columns of $\S$ are the normalized
characteristic vectors of $E_u$ (as before), but the last
column of $\S$ equals $\left[{\,}^{\0}_{\v}\right]$, where $\v$
is a normalized eigenvector of $\Q_2^\top\Q_2$ for the
eigenvalue $\vartheta_1$. Then
$b_{m+1,m+1}=\v^\top\Q_2^\top\Q_2 \v=\vartheta_1$, and we find
$\tr \B = \sum_{u\in U}d_u+m-|E[U]|+\vartheta_1$.
\end{proof}

\section{Some applications}
The previous bounds on the sum of Laplacian eigenvalues are
used to provide meaningful results involving the edge-connectivity of the graph, the
size of a $k$-dominating set and the isoperimetric number.

\subsection{Cuts}
Given a vertex subset $U$ of a connected graph $G$ with $0<|U|<n$,
the edge set $\partial(U,\overline{U})$ is called a {\em cut}
(since deletion of these edges makes $G$ disconnected).
The minimum size of a cut in $G$ is called the {\em edge-connectivity} $\kappa_e (G)$ of $G$.
By use of inequality~(\ref{eq:Aida1}) we obtain the following bound for $\kappa_e (G)$.
\begin{prp}
\begin{equation}
\label{edge-connec} \kappa_e (G)\le
\min_{0<m<n}\left\{(n-m)\sum_{i=1}^{m}(\lambda_{i}-d_{i})\right\}.
\end{equation}
\end{prp}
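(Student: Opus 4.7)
The plan is to read off~(\ref{edge-connec}) as an immediate corollary of inequality~(\ref{eq:Aida1}). The crucial observation is that~(\ref{eq:Aida1}) can be rearranged into an \emph{upper} bound on the cut size $|\partial(U,\overline{U})|$, and that every such cut size is, by definition of edge-connectivity, an upper bound on $\kappa_e(G)$.

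Concretely, I would proceed as follows. Fix an integer $m$ with $0<m<n$ and, exactly as in the derivation of~(\ref{eq:Aida1}), take $U=\{u_1,\ldots,u_m\}$ to be a set of $m$ vertices of largest degree, so that $\sum_{u\in U}d_u=\sum_{i=1}^{m}d_i$. Rearranging~(\ref{eq:Aida1}) then yields
\[
|\partial(U,\overline{U})|\le (n-m)\sum_{i=1}^{m}(\lambda_i-d_i).
\]
Since $0<|U|<n$, the edge set $\partial(U,\overline{U})$ is a cut of $G$; hence by definition $\kappa_e(G)\le|\partial(U,\overline{U})|$. Chaining the two inequalities and then minimizing over $m\in\{1,\ldots,n-1\}$ reproduces the right-hand side of~(\ref{edge-connec}).

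There is essentially no obstacle to this argument: it is a two-step unpacking once~(\ref{eq:Aida1}) is in hand. The only points worth flagging are that the choice of $U$ as the top-$m$ degree vertices is what makes~(\ref{eq:Aida1}) applicable in the displayed form (it is precisely the specialization of the right-hand inequality of~(\ref{basic-ineq}) to that choice of $U$), and that the range $0<m<n$ in the minimization ensures that $\partial(U,\overline{U})$ is always a legitimate cut, so that the bound $\kappa_e(G)\le|\partial(U,\overline{U})|$ is valid throughout.
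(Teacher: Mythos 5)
Your proposal is correct and is exactly the argument the paper intends: the paper simply states that the proposition follows "by use of inequality~(\ref{eq:Aida1})", and your two-step unpacking (rearrange~(\ref{eq:Aida1}) for the top-$m$ degree vertices into an upper bound on the cut size, then use that any cut size upper-bounds $\kappa_e(G)$) is the standard reading of that remark.
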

Some general bounds on the size of a cut can be derived from the following lemma.
\begin{lem}
\label{lem:Guillem} Let $G$ be a graph with $n$ vertices and $e$
edges. For any m, $0<m< n$, there exist some (not necessarily
different) vertex subsets $U$ and $U'$ such that $|U|=|U'|=m$ and
\begin{equation}
\label{bounds-maxmin-cuts} |\partial(U,\overline{U})|\geq
\frac{2em(n-m)}{n(n-1)}, \qquad |\partial(U',\overline{U'})|\leq
\frac{2em(n-m)}{n(n-1)}.
\end{equation}
\end{lem}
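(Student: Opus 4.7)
The plan is to prove the lemma by a simple averaging (first-moment) argument over a uniformly random $m$-subset of the vertex set.

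First I would fix an arbitrary edge $\{u,v\}\in E$ and compute the probability that it lies in the cut when $U$ is chosen uniformly at random among all $m$-subsets of $V$. The edge contributes to $\partial(U,\overline{U})$ precisely when exactly one of $u,v$ belongs to $U$. A direct count gives
\begin{equation*}
\Pr[\{u,v\}\in\partial(U,\overline{U})] \;=\; \frac{2\binom{n-2}{m-1}}{\binom{n}{m}} \;=\; \frac{2m(n-m)}{n(n-1)}.
\end{equation*}
Note that this probability does not depend on the specific edge chosen.

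Next I would apply linearity of expectation over the $e$ edges of $G$:
\begin{equation*}
\mathbb{E}\bigl[|\partial(U,\overline{U})|\bigr] \;=\; \sum_{\{u,v\}\in E}\Pr[\{u,v\}\in\partial(U,\overline{U})] \;=\; \frac{2em(n-m)}{n(n-1)}.
\end{equation*}

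Finally, since this is the average value of $|\partial(U,\overline{U})|$ over all $\binom{n}{m}$ choices of $U$, there must exist at least one $m$-subset $U$ whose cut size is at least the average, and at least one $m$-subset $U'$ whose cut size is at most the average. These two subsets witness the two inequalities in \eqref{bounds-maxmin-cuts}. The only step requiring any care is the probability computation, but this is a routine count; no serious obstacle is expected.
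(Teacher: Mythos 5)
Your proof is correct and follows essentially the same route as the paper: a uniformly random $m$-subset, the per-edge cut probability $\frac{2m(n-m)}{n(n-1)}$, linearity of expectation, and the existence of subsets above and below the mean. The only difference is that you make the binomial-coefficient computation explicit, which the paper leaves implicit.
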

\begin{proof}
Choose a set $S$ uniformly at random among all the sets of size
$m$ in $V$. Then the probability that an edge belong to
$\partial(S,\overline{S})$ is the probability that either the
first endpoint belongs to $S$ and the second one to $\overline{S}$
or viceversa. That is,
$$
\Pr(\text{edge} \in \partial(S,\overline{S})) =
2\frac{m(n-m)}{n(n-1)}.
$$
Then, the expected number of edges between the two sets is,
$$
\mathbb{E}\{|\partial(S,\overline{S})|\}=\frac{2em(n-m)}{n(n-1)},
$$
implying that there are sets, $U$ and $U'$, with at least and at
most this number of edges going out, respectively.
\end{proof}

Both bounds are tight for the complete graph $K_n$.
Using bounds (\ref{bounds-maxmin-cuts}), Theorem~\ref{thm:basic-result} gives:
\begin{cor}\label{cor:1}
For each $m$ ($0<m<n$) $G$ has vertex sets $U$ and $U'$ of size $m$ such that
\begin{equation}
 \sum_{i=1}^{m}\lambda_{i} \geq
\sum_{u\in U} d_{u} + \frac{2em}{n(n-1)},
\end{equation}
and
\begin{equation}
\sum_{i=1}^{m}\lambda_{n-i} \leq \sum_{u\in U'} d_{u} +
\frac{2em}{n(n-1)}.
\end{equation}
\end{cor}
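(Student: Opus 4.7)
The plan is to combine Theorem~\ref{thm:basic-result} with Lemma~\ref{lem:Guillem} by choosing the vertex subsets supplied by the lemma and substituting the bounds on $|\partial(U,\overline{U})|$ and $|\partial(U',\overline{U'})|$ into the two inequalities of~(\ref{basic-ineq}). No new ingredient is required beyond these two results, which makes the corollary essentially immediate.

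For the first inequality, I would invoke Lemma~\ref{lem:Guillem} to pick a vertex set $U \subset V$ of size $m$ with
\[
|\partial(U,\overline{U})| \geq \frac{2em(n-m)}{n(n-1)}.
\]
Then the right-hand inequality of Theorem~\ref{thm:basic-result} gives
\[
\sum_{i=1}^{m}\lambda_i \;\geq\; \sum_{u \in U} d_u + \frac{|\partial(U,\overline{U})|}{n-m} \;\geq\; \sum_{u \in U} d_u + \frac{2em}{n(n-1)},
\]
where the $(n-m)$ factor conveniently cancels. This yields the first claim.

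For the second inequality, I would symmetrically take $U' \subset V$ of size $m$ with the upper bound from Lemma~\ref{lem:Guillem}, namely $|\partial(U',\overline{U'})| \leq 2em(n-m)/(n(n-1))$, and feed it into the left-hand inequality of~(\ref{basic-ineq}):
\[
\sum_{i=1}^{m}\lambda_{n-i} \;\leq\; \sum_{u \in U'} d_u + \frac{|\partial(U',\overline{U'})|}{n-m} \;\leq\; \sum_{u \in U'} d_u + \frac{2em}{n(n-1)}.
\]

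There is essentially no obstacle here, since all the work has been done in the cited results; the only thing to check is that the factor $(n-m)$ from the denominator of Theorem~\ref{thm:basic-result} matches the factor $(n-m)$ in the numerator of Lemma~\ref{lem:Guillem}, which it does exactly. One minor point worth noting is that $U$ and $U'$ need not coincide and need not consist of the vertices of highest or lowest degree, but that is consistent with the statement of the corollary.
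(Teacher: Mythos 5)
Your proof is correct and is exactly the paper's argument: the paper derives this corollary in one line by substituting the bounds of Lemma~\ref{lem:Guillem} into the two inequalities of Theorem~\ref{thm:basic-result}, with the factor $(n-m)$ cancelling just as you observe. Nothing further is needed.
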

In particular, if $G$ is $d$-regular, we have $e=nd/2$ and the
above inequalities become
\begin{equation}
\label{bounds-reg}
 \sum_{i=1}^{m}\lambda_{i} \geq \frac{mdn}{n-1}\qquad\mbox{and}\qquad \sum_{i=1}^{m}\lambda_{n-i} \leq
\frac{mdn}{n-1},
\end{equation}
with bounds close to $md$ when $n$ is large.
\subsection{$k$-Dominating sets}
\noindent A \emph{dominating set} in a graph $G$ is a vertex
subset $D\subseteq V$ such that every vertex in $V\backslash D$
is adjacent to some vertex in $D$.
More generally, for a given integer $k$,
a \emph{$k$-dominating set} in a graph $G$ is a vertex subset
$D\subseteq V$ such that every vertex not in $D$ has at least
$k$ neighbors in $D$.
\begin{prp}
\label{propo:Aida5} Let $G$ be a graph on $n$ vertices, with
vertex degrees $d_{1}\geq d_{2}\geq \cdots \geq d_{n}$, and
eigenvalues $\lambda_{1}\geq \lambda_{2}\geq \cdots \geq \lambda_{n}(=0)$.
Let $D$ be a $k$-dominating set in $G$ of cardinality $m$.
Then,
\begin{equation}
\label{eq:Aida5}
 \sum_{i=1}^{m}\lambda_{i} \geq
\sum_{u\in D}d_{u} + k.
\end{equation}
\end{prp}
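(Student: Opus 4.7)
The plan is to deduce this proposition directly from Theorem~\ref{thm:basic-result} applied to $U = D$, using the $k$-domination property to lower bound the edge-boundary term $|\partial(D,\overline{D})|/(n-m)$ by $k$.

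First, I would verify that the right-hand inequality of~(\ref{basic-ineq}) can be applied with $U = D$: we need $0 < m < n$, which we may assume (if $m = n$ the statement reads $\tr \L \geq \tr \L + k$, which fails unless $k=0$, so the proposition is implicitly about proper dominating sets). The connectedness hypothesis in Theorem~\ref{thm:basic-result} is not actually needed for the right-hand inequality of~(\ref{basic-ineq}); the proof only uses that $\B$ has zero row sum and that $\lambda_n = 0$, both of which hold for any graph. Hence we obtain
\begin{equation*}
\sum_{i=1}^{m}\lambda_i \;\geq\; \sum_{u \in D} d_u + \frac{|\partial(D,\overline{D})|}{n-m}.
\end{equation*}

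Next I would use the defining property of a $k$-dominating set: every vertex $v \in \overline{D}$ has at least $k$ neighbors in $D$, so each such $v$ contributes at least $k$ edges to $\partial(D,\overline{D})$. Summing over $\overline{D}$ gives
\begin{equation*}
|\partial(D,\overline{D})| \;\geq\; k(n-m),
\end{equation*}
so $|\partial(D,\overline{D})|/(n-m) \geq k$. Substituting this lower bound into the previous display yields~(\ref{eq:Aida5}).

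There is no genuine obstacle here; the proposition is really a one-line corollary of Theorem~\ref{thm:basic-result} once the $k$-domination hypothesis is translated into a lower bound on the edge boundary. The only point requiring a small remark is that Theorem~\ref{thm:basic-result} as stated assumes connectedness, so I would briefly note that connectedness is not used in the direction of the inequality we need (or alternatively observe that the proposition for a disconnected graph reduces to the connected case by noting that a $k$-dominating set must intersect every component nontrivially when $k \geq 1$, and handling $k=0$ trivially via~\eqref{eq:1HaemersSeminar}).
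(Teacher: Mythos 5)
Your proposal is correct and follows essentially the same route as the paper: apply Theorem~\ref{thm:basic-result} with $U=D$ and use the $k$-domination property to get $|\partial(D,\overline{D})|\ge k(n-m)$, hence $|\partial(D,\overline{D})|/(n-m)\ge k$. Your additional remarks on the roles of connectedness and the case $m=n$ are sensible refinements that the paper's one-line proof omits.
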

\begin{proof}
First, the inequality \eqref{eq:Aida5} follows from Theorem~\ref{thm:basic-result}
by noting that, from the definition of a $k$-dominating set, $|\partial(D,\overline{D})|\ge k(n-m)$.
\end{proof}
\begin{example} Consider the $K_{p,\ldots,p}$  regular complete multipartite
graph with $q$ classes of size $p$, so $n=pq$ and $d=p(q-1)$.
The eigenvalues of its Laplacian matrix are
\[
\{ (d+p)^{q-1}, d^{n-q}, 0^{1}\}.
\]
Observe that the union of some partition classes gives a
$k$-dominating set of size $m=k$. If we take the first $k$
eigenvalues, the inequality ~\eqref{eq:Aida5} becomes
$(d+p)(q-1)+(k-(q-1))d \geq kd+k$, and using that $d=p(q-1)$ we
get $d(k+1)\geq k(d+1)$. Note that if $k=d$ we have equality.
\end{example}

\subsection{The isoperimetric number}

Given a graph $G$ on $n$ vertices, the \emph{isoperimetric number} $i(G)$ is defined
as
\[
i(G)=\min_{U\subset V} \left\{|\partial(U,\overline{U})|/|U| : 0 < |U|\leq n/2\right\}.
\]
For example, the isoperimetric numbers of the complete
graph, the path and the cycle are, respectively, $i(K_n)=\lceil
\frac{n}{2} \rceil$, $i(P_n)=1/\lfloor \frac{n}{2} \rfloor$,
and  $i(C_n)=2/\lfloor \frac{n}{2} \rfloor$.
For general graphs, Mohar \cite{M1989} proved the following
spectral bounds.
\begin{equation}
\label{eq:Mohar}
\frac{\lambda_{n-1}}{2}\leq i(G) \leq \sqrt{\lambda_{n-1} (2d_1 -\lambda_{n-1})}.
\end{equation}
In our context we have:
\begin{prp}
\label{propo:Aida7b}
\begin{equation}
\label{eq:Aida7b}
i(G)\leq  \min_{\frac{n}{2}\leq m < n} \sum_{i=1}^{m}(\lambda_{i} - d_{i}).
\end{equation}
\end{prp}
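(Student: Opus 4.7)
The idea is to rewrite the isoperimetric ratio from the complement side and then plug in inequality~\eqref{eq:Aida1}.

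First I would observe that, by the definition of $i(G)$, for every vertex subset $W\subset V$ with $0<|W|\leq n/2$ we have $i(G)\leq |\partial(W,\overline{W})|/|W|$. Setting $U=\overline{W}$ and $m=|U|=n-|W|$, the condition $|W|\le n/2$ becomes $m\geq n/2$, and since $\partial(W,\overline{W})=\partial(U,\overline{U})$ and $|W|=n-m$, this is equivalent to
\[
i(G)\;\leq\;\frac{|\partial(U,\overline{U})|}{n-m}\qquad\text{for every }U\subset V\text{ with }n/2\leq m=|U|<n.
\]

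Next I would specialise $U$ to be the set of the $m$ vertices with largest degrees, so that $\sum_{u\in U}d_u=\sum_{i=1}^m d_i$. Inequality~\eqref{eq:Aida1}, which is an immediate consequence of Theorem~\ref{thm:basic-result}, then yields
\[
\frac{|\partial(U,\overline{U})|}{n-m}\;\leq\;\sum_{i=1}^{m}\lambda_i-\sum_{i=1}^{m}d_i\;=\;\sum_{i=1}^{m}(\lambda_i-d_i).
\]
Combining the two displayed inequalities gives $i(G)\le \sum_{i=1}^m(\lambda_i-d_i)$ for every $m$ with $n/2\le m<n$, and taking the minimum over such $m$ produces~\eqref{eq:Aida7b}.

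There is no serious obstacle here: the only content is to realise that the hypothesis $|U|\leq n/2$ in the definition of $i(G)$ corresponds, via complementation, to $m\geq n/2$ on the side where Theorem~\ref{thm:basic-result} is applied, and that the denominator $n-m$ in~\eqref{eq:Aida1} is exactly the size of the complement of $U$, which is what the isoperimetric number divides by. Once the roles of $U$ and $\overline{U}$ are matched correctly, the bound follows in one line from~\eqref{eq:Aida1}.
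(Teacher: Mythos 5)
Your proof is correct and follows exactly the paper's argument: the paper likewise applies inequality~\eqref{eq:Aida1} with $U$ the $m$ highest-degree vertices and uses that $i(G)\leq |\partial(\overline{U},U)|/|\overline{U}|$ whenever $0<|\overline{U}|\leq n/2$, which is precisely your complementation step. No issues.
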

\begin{proof}
Apply (\ref{eq:Aida1}) taking into account that
$i(G)\leq\frac{|\partial(\overline{U},U)|}{|\overline{U}|}$ when
$0<|\overline{U}|\leq\frac{n}{2}$.~\end{proof}
\begin{example} Consider the graph join $G$ of
the complete graph $K_p$ with the empty graph $\overline{K_q}$,
so $n=p+q$. The Laplacian spectrum and the degree sequence are
\[
\{ n^{p}, p^{q-1}, 0^{1}\}\ \mbox{ and }\
\{ (n-1)^{p}, p^{q} \},
\]
respectively. Equation~$(\ref{eq:Aida7b})$ gives
$i(G)\leq\min\{p,\lceil\frac{n}{2}\rceil\}$, which is better
than Mohar's upper bound~$(\ref{eq:Mohar})$ for all $0\leq
q<n$.
\end{example}

\subsection*{Acknowledgments}
Research supported by the {\em Ministerio de Ciencia e Innovaci\'on}, Spain, and the {\em European Regional Development Fund} under project MTM2011-28800-C02-01  and by the {\em Catalan Research Council} under project 2009SGR1387 (M.A.F. and G.P.),
and by  {\em The Netherlands Organisation for Scientific Research} $(NWO)$ (A.A.).

\end{document}